\documentclass[11pt]{amsart}
\usepackage{amsmath,amscd,amssymb,amsthm,mathtools}
\usepackage[mathscr]{eucal}
\usepackage[frame,cmtip,arrow,matrix,line,graph,curve]{xy}
\usepackage{graphpap, color}
\usepackage{cancel}
\usepackage{verbatim}
\usepackage{adjustbox}
\usepackage[margin=1in]{geometry}
\usepackage{pgfplots}
\pgfplotsset{compat=1.18}

\usepackage{tikz}
\usepackage{float}
\usepackage{booktabs}
\usepackage{multirow}
\usepackage{tabularx}
\usepackage{enumitem}
\usepackage{microtype}

\usepackage{hyperref}

\numberwithin{equation}{section}

\newtheorem{theo}{Theorem}[section]

\newtheorem{prop}[theo]{Proposition}
\newtheorem{lemm}[theo]{Lemma}

\theoremstyle{definition}
\newtheorem{defi}[theo]{Definition}
\newtheorem{ques}[theo]{Question}
\newtheorem{exam}[theo]{Example}

\newcommand{\CC}{\mathbb{C}}

\newcommand{\PP}{\mathbb{P}}
\newcommand{\QQ}{\mathbb{Q}}

\newcommand{\ZZ}{\mathbb{Z}}

\newcommand{\HH}{\mathbb{H}}

\newcommand{\bH}{\mathbf{H}}

\def\cA{{\mathcal A}}
\def\cB{{\mathcal B}}
\def\cC{{\mathcal C}}

\def\cM{{\mathcal M}}

\def\cI{{\mathcal I}}

\def\begeq{\begin{equation}}
\def\endeq{\end{equation}}
\def\and{\quad{\rm and}\quad}
\def\lra{\longrightarrow }

\def\beq{\begin{equation}}
\def\eeq{\end{equation}}

\def\bcM{\overline{\cM}}
\def\cAn{\cA^{(n)}}
\def\Yn{Y^{(n)}}

\title{Hodge-Deligne and Poincar\'e polynomials of $\overline{\mathcal M}_{1,n}$} 
\author{Young-Hoon Kiem}
\address{School of Mathematics, Korea Institute for Advanced Study, 85 Hoegiro, Dongdaemun-gu, Seoul 02455, Korea}
\email{kiem@kias.re.kr}

\date{}

\begin{document}

\begin{abstract}
    By studying wall crossings of Hassett moduli spaces of weighted pointed curves, we prove a recursive formula that reduces the computation of the cohomology of $\bcM_{g,n}$ to that of the moduli space $\bcM_{g,(0^+)^n}$ of stable pointed curves of minimal weights. For $g=1$, we further prove an explicit formula for the Hodge-Deligne and Poincar\'e polynomials of $\bcM_{1,(0^+)^n}$. Combining these, we prove formulas that enable us to compute the cohomology of $\bcM_{1,n}$ efficiently.  
\end{abstract}

\maketitle

\section{Introduction}

The moduli space $\bcM_{g,n}$ of stable curves of genus $g$ with $n$ marked points is one of the most important objects in algebraic geometry \cite{DM, Knu}. It parametrizes connected projective curves $C$ of arithmetic genus $g$ with at worst nodal singularities together with $n$ ordered smooth marked points $p_1,\cdots, p_n$ in $C$ such that 
\begin{enumerate}
\item $p_i$ are all distinct and 
\item $\omega_C(\sum_i p_i)$ is ample. 
\end{enumerate}
It is a smooth proper Deligne-Mumford stack with projective coarse moduli space. It plays a fundamental role in modern enumerative geometry and serves as a model for moduli spaces of higher dimensional varieties. 
For over half a century, much effort has been exerted to understand the geometry and topology of $\bcM_{g,n}$ and there has been a constant stream of new discoveries even until now \cite{BK,BKs,CK,CKL1,CKL2,CKL3,KSY}. 

In this paper, we focus on the cohomology of $\bcM_{1,n}$. 
In principle, the cohomology $H^*(\bcM_{1,n})$ can be computed by \cite{Getzler2} which gives a formula for the $S_n$-equivariant Hodge-Deligne polynomial of $\bcM_{1,n}$. Hence we can compute the ordinary Hodge-Deligne polynomial as well as the $S_n$-action on the cohomology. 
Unfortunately, the formula is rather difficult to use for practical computation: We first need to compute the \emph{equivariant} Hodge-Deligne polynomials of $\cM_{1,n}$, $\cM_{0,n}$ and $\bcM_{0,n}$ for all $n$ and  then expand a power series after applying the plethystic composition. As far as we know, computation for $\bcM_{1,n}$ has been worked out only for $n$ up to 20 \cite{BeT}. 

In contrast, the computation of the Poincar\'e polynomial of $\bcM_{0,n}$ is much easier due to an \emph{explicit recursive} formula \cite[(4.12)]{Manin} which can be proved by Keel's blowup construction \cite{Keel}.
Therefore it seems reasonable to ask the following.
\begin{ques}
    Is there an explicit recursive formula for the Poincar\'e and Hodge-Deligne polynomials of $\bcM_{1,n}$?
\end{ques}
The purpose of this paper is to provide a positive answer to this question so that we get a new and effective way to compute the Hodge and Betti numbers of $\bcM_{1,n}$. 

\medskip

Our computation of $H^*(\bcM_{1,n})$ is achieved in two stages. 
We first reduce the computation to a simpler moduli space $\bcM_{1,(0^+)^n}$, where $0^+$ denotes a positive rational number sufficiently close to $0$.  
We note that many of the strata in $\bcM_{1,n}$ disappear and the computation becomes much easier if we allow all the marked points to coincide. This happens in the moduli space $\bcM_{1,(0^+)^n}$ of stable pointed curves with weight $0^+$ for each marked point \cite{Hst}. 
By studying wall crossings in Hassett's moduli spaces of weighted pointed curves, we prove the following.
\begin{theo}[Theorem \ref{17}] Let $g\ge 1$ and fix a motivic invariant $\bH$ satisfying the blowup formula \eqref{12}. 
Let $\bcM_{g,r|s}=\bcM_{g,(1^r,(0^+)^s)}$ be the moduli space of stable curves of genus $g$ with $r$ points of weight 1 and $s$ points of weight $0^+$. Let $\bH_{r|s}:=\bH_{\bcM_{g,r|s}}$. Then for $r,s\ge 0$ with $r+s>0$ and $n>0$, the following hold:
   \beq\label{32} \bH_{r|s} = \bH_{0|r+s} + \sum_{j=1}^{\min(r,r+s-2)} \sum_{k=0}^{r+s-j-2} \binom{r+s-j}{k}\, \bH_{j|k}  \cdot (\bH_{\PP^{r+s-j-k-1}}-1),\eeq
   \beq\label{33} \bH_{\bcM_{g,n}}= \bH_{n|0} = \bH_{0|n} + \sum_{j=1}^{n-2} \sum_{k=0}^{n-j-2} \binom{n-j}{k}\, \bH_{j|k} \cdot (\bH_{\PP^{n-j-k-1}}-1).\eeq
All $\bH_{r|s}$ are recursively determined by \eqref{32} if we know $\bH_{0|n}=\bH_{\bcM_{g,(0^+)^n}}$ for all $n$.   
\end{theo}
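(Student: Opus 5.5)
The approach is to interpolate between $\bcM_{g,(r-1)|(s+1)}$ and $\bcM_{g,r|s}$ by Hassett wall crossings. We raise the weight of a single marked point from $0^+$ to $1$ and show that each wall crossing is a blowup along smooth centers. Then \eqref{12} gives a one-step recursion, and telescoping it in $r$ yields \eqref{32}. Formula \eqref{33} is the special case $r=n$, $s=0$, since $\bcM_{g,n}=\bcM_{g,n|0}$.

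\emph{Step 1 (one-step formula).} Fix $n=r+s$ and consider weights $(1^{r-1},t,(0^+)^{s})$ with $t$ moving from $0^+$ to $1$. In Hassett's theory the walls are where some subset of marked points acquires total weight exactly $1$. Since the light points have weight $\epsilon\ll1$, the walls are $t+m\epsilon=1$, one for each $m=1,\dots,s$. Such a wall governs the loci where the moving point $p$ collides with a subset $S$ of $m$ light points. As $t$ increases, the largest collisions are forbidden first, so the walls are met with $m$ decreasing.

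I would show that at the wall for $m$, the reduction morphism from the larger weight side to the smaller one is the blowup along the disjoint union, over $S$ with $|S|=m$, of the loci $Z_S$ where $p$ coincides with $S$. Disjointness should hold because two distinct subsets of size $m$ colliding with $p$ simultaneously would force a collision of size $>m$, and those were already resolved at earlier walls. For $g\ge1$ there are no unstable components to contract, so I expect $Z_S$ to be smooth of codimension $m$: requiring $m$ points to coincide with $p$ imposes $m$ conditions. I also expect $Z_S\cong\bcM_{g,r|(s-m)}$, with the merged point counted as heavy. The exceptional divisor is then a $\PP^{m-1}$-bundle, whose fiber parametrizes the rational tail carrying $p$ and $S$ modulo scaling.

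Applying \eqref{12} at every wall, and writing $k=s-m$, gives
\[
\bH_{r|s}=\bH_{r-1|s+1}+\sum_{k=0}^{s-2}\binom{s}{k}\,\bH_{r|k}\,(\bH_{\PP^{s-k-1}}-1).
\]
The term $m=1$ is omitted because $\bH_{\PP^0}-1=0$; indeed no new stratum appears when a single light point collides with $p$.

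\emph{Step 2 (telescoping).} Apply the one-step formula with $(r,s)$ replaced by $(j,n-j)$ for $j=1,\dots,r$, and sum. Since $s=n-j$, this gives
\[
\bH_{r|s}=\bH_{0|n}+\sum_{j=1}^{r}\sum_{k=0}^{n-j-2}\binom{n-j}{k}\,\bH_{j|k}\,(\bH_{\PP^{n-j-k-1}}-1).
\]
The inner sum is empty when $n-j<2$, so the range of $j$ can be cut to $j\le\min(r,n-2)$, which is \eqref{32}. On the right-hand side only $\bH_{j|k}$ with $j+k\le n-2<n$ occur. Hence induction on $n$ shows that every $\bH_{r|s}$ is determined by the values $\bH_{0|m}$ for $m\le n$, which proves the final claim.

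\emph{Main obstacle.} The hard part is Step 1: verifying that the Hassett reduction map at each wall is exactly a blowup with smooth center isomorphic to $\bcM_{g,r|(s-m)}$. This requires a deformation-theoretic check that the normal bundle of $Z_S$ has rank $m$ and is compatible with the universal rational tail. It also requires handling the stack structure: either \eqref{12} must apply to smooth DM stacks, or the argument must pass to coarse spaces. I would try to adapt Hassett's analysis, and the known genus $0$ case (Keel/Losev–Manin type blowups), to the setting with a genus $g$ core, using that automorphisms are finite when $g\ge1$.
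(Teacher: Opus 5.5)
Your proposal is correct and is essentially the paper's proof. The paper's weight vectors $\cAn_{r,s}$ raise one point's weight at a time and cross exactly the walls you describe, with centers consisting of $\binom{s}{k}$ disjoint copies of $\bcM_{g,r|k}$ of codimension $s-k$. This yields the one-step formulas \eqref{13} and \eqref{16}, which the paper then telescopes just as in your Step 2.

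The step you single out as the main obstacle needs no new deformation theory. It is precisely Hassett's wall-crossing result quoted in Theorem \ref{2}(3)--(4): a smooth blowup when at least three points collide, and an isomorphism when only two do. The paper simply cites it.
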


Next we compute the cohomology of $\bcM_{1,(0^+)^n}$ piece by piece. 
Let $$\pi:\bcM_{1,(0^+)^n}\lra \bcM_{1,0^+}=\bcM_{1,1}$$
be the morphism that forgets all the marked points except the first. The inverse image $\cI_n=\pi^{-1}(\cM_{1,1})$ of $\cM_{1,1}$ is the $(n-1)$-fold fiber product of the universal curve $$q:\cC\lra \cM_{1,1}=SL_2(\ZZ)\backslash \HH, \quad \HH=\{z\in \CC\,|\, \mathrm{Im}\, z>0\}$$ and we have
\beq\label{34} R\pi_*\QQ_{\cI_n}=(Rq_*\QQ_\cC)^{\otimes n-1}=\left((\QQ\oplus \QQ[-2])\oplus V[-1]\right)^{\otimes n-1}\eeq
by the relative K\"unneth formula, where $V=R^1q_*\QQ$.
Note that $H^*_c(\cM_{1,1},V^{\otimes k})=0$ for $k$ odd by the action of the generic stabilizer $-I\in SL_2(\ZZ)$.  
By expanding the right hand side of \eqref{34} and applying the Clebsch-Gordan rule, the computation of $H^*_c(\cI_n)$ is reduced to that of $H^*_c(\cM_{1,1}, \mathrm{Sym}^{2j}V)$ which can be obtained by the Eichler-Shimura isomorphism (cf. Lemma \ref{35}). 
Combining these, we find that the Hodge-Deligne polynomial of the interior $\cI_n$ is (cf. Proposition \ref{3})
\beq\label{36}
\sum_{j=0}^{\lfloor\frac{n-1}{2}\rfloor} \binom{n-1}{2j} (uv+1)^{n-1-2j} \left( \alpha_{jj}(uv)^{j+1} - \sum_{i=0}^{j-1} \alpha_{ij}(uv)^i\right),
\eeq
\[-\sum_{j=0}^{\lfloor\frac{n-1}{2}\rfloor} \binom{n-1}{2j} (uv+1)^{n-1-2j}  \sum_{i=0}^{j-1} \alpha_{ij}\beta_{2j-2i+2}(uv)^i(u^{2j-2i+1}+v^{2j-2i+1})  \]
where \[ \alpha_{ij}=\binom{2j}{i}-\binom{2j}{i-1}
\and 
\beta_k=\left\{\begin{matrix}  \lfloor \frac{k}{12}\rfloor -1 & \text{if } k\equiv 2 \text{ mod } 12\\
\lfloor \frac{k}{12}\rfloor & \text{if } k\not\equiv 2 \text{ mod } 12.\end{matrix}\right.
\]

The complement $\cB_n=\pi^{-1}(\infty)=\bcM_{1,n}-\cI_n$ consists of pointed rational nodal curves. 
Let $\cB_{n,j}\subset \bcM_{1,n}$ be the locus of pointed rational curves whose dual graphs are necklaces with $j$ vertices. 
It is obvious that there are $\gamma_{n,j}$
dual graphs with $j$ vertices, where
\[ \gamma_{n,j}=\left\{\begin{matrix} n\\ j\end{matrix}\right\}\frac{(j-1)!}{2} \ \ \text{for }\ \ j>2, \quad \gamma_{n,1}=1, \ \ \gamma_{n,2}=2^{n-1}-1\]
and $\left\{\begin{matrix} n\\ j\end{matrix}\right\}$ denotes the Stirling number of the second kind. 
For $j\ge 2$, $\cB_{n,j}\cong (\CC^*)^{n-j}$ and hence $H^*_c(\cB_{n,j})$ has Hodge-Deligne polynomial $\gamma_{n,j}(uv-1)^{n-j}$. For $j=1,2$, we have to consider an additional action of $S_2$ which acts on $\CC^*$ by $z\mapsto z^{-1}$. Combining all these, we find that the Hodge-Deligne polynomial of the boundary $\cB_n$ is 
\beq\label{37}
\frac{(uv+1)^{n-1}+(uv-1)^{n-1}}{2} + \gamma_{n,2} \frac{(uv+1)^{n-2}+(uv-1)^{n-2}}{2} + \sum_{j=3}^n \gamma_{n,j} (uv-1)^{n-j}.
\eeq
By \eqref{36}, \eqref{37} and  \eqref{32}, we have the following.
\begin{theo} [Theorem \ref{22}]
The Hodge-Deligne polynomial $\bH_{0|n}$ of $\bcM_{1,(0^+)^n}=\bcM_{1,0|n}$ is 
\beq\label{38}
\sum_{j=0}^{\lfloor\frac{n-1}{2}\rfloor} \binom{n-1}{2j} (uv+1)^{n-1-2j} \left( \alpha_{jj}(uv)^{j+1} - \sum_{i=0}^{j-1} \alpha_{ij}(uv)^i\right),
\eeq
\[ +\frac{(uv+1)^{n-1}+(uv-1)^{n-1}}{2} + \gamma_{n,2} \frac{(uv+1)^{n-2}+(uv-1)^{n-2}}{2} + \sum_{j=3}^n \gamma_{n,j} (uv-1)^{n-j} \]
\[-\sum_{j=0}^{\lfloor\frac{n-1}{2}\rfloor} \binom{n-1}{2j} (uv+1)^{n-1-2j}  \sum_{i=0}^{j-1} \alpha_{ij}\beta_{2j-2i+2}(uv)^i(u^{2j-2i+1}+v^{2j-2i+1}).  \]
The Hodge-Deligne polynomial $\bH_{n|0}$ of $\bcM_{1,n}=\bcM_{1,n|0}$ can be computed by the recursive formula \eqref{33} together with \eqref{32} and \eqref{38}.
\end{theo}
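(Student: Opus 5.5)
The plan is to use additivity of the Hodge-Deligne polynomial (the $E$-polynomial of compactly supported cohomology) over the decomposition $\bcM_{1,(0^+)^n}=\cI_n\sqcup\cB_n$. Here $\cI_n=\pi^{-1}(\cM_{1,1})$ is open and $\cB_n=\pi^{-1}(\infty)$ is its closed complement. The long exact sequence in compactly supported cohomology is a sequence of mixed Hodge structures, so $\bH_{0|n}=E(\cI_n)+E(\cB_n)$. Since the space is smooth and proper (as a DM stack, so its rational cohomology is that of the projective coarse space), this $E$-polynomial is the Hodge-Deligne polynomial of the pure cohomology. The theorem then follows once we establish \eqref{36} for $\cI_n$ and \eqref{37} for $\cB_n$. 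The final assertion about $\bH_{n|0}$ is just Theorem \ref{17}: substitute $g=1$ and \eqref{38} into \eqref{32} and \eqref{33}.

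For $\cI_n$ I would use the Leray spectral sequence of $\pi$ with compact supports, combined with \eqref{34}.
\begin{enumerate}
\item Expand $\left((\QQ\oplus\QQ(-1)[-2])\oplus V[-1]\right)^{\otimes n-1}$. Choosing $m$ tensor factors equal to $V$ contributes $\binom{n-1}{m}(uv+1)^{n-1-m}$ times the contribution of $V^{\otimes m}$.
\item Discard odd $m$, since $-I$ acts by $-1$.
\item For $m=2j$, apply Clebsch-Gordan: $V^{\otimes 2j}=\bigoplus_i \alpha_{i j}\,\mathrm{Sym}^{2j-2i}V\otimes\QQ(-i)$. The multiplicity $\alpha_{ij}=\binom{2j}{i}-\binom{2j}{i-1}$ is the standard count, and the Tate twist accounts for the factor $(uv)^i$.
\item Apply Lemma \ref{35} (Eichler-Shimura) to get the Hodge-Deligne polynomial of $H^*_c(\cM_{1,1},\mathrm{Sym}^{2k}V)$. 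For $k=0$ it is $uv$. For $k>0$, cohomology sits only in degree $1$. It consists of the cusp forms part, contributing $-\beta_{2k+2}(u^{2k+1}+v^{2k+1})$, where $\beta_{2k+2}=\dim S_{2k+2}$, and the Eisenstein (weight-zero) part, contributing $-1$ with the sign of odd degree.
\end{enumerate}
Putting $k=j-i$ and summing gives exactly \eqref{36}. The $i=j$ term gives $\alpha_{jj}(uv)^{j+1}$, and the terms $i<j$ give the $-\alpha_{ij}(uv)^i$ and the $\beta$-terms. One must check that the Leray spectral sequence degenerates, or at least that its $E$-polynomial is additive; this holds because $E$-polynomials are Euler characteristics, so degeneration is not needed.

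For $\cB_n$, I would stratify by the dual graph of the rational nodal fibre. Over $\infty$, a curve in $\bcM_{1,(0^+)^n}$ is a cycle of $\PP^1$'s, i.e. a necklace with $j$ vertices, with the points distributed so that every component has a point; points are allowed to collide. The strata are indexed by $\gamma_{n,j}$ cyclic arrangements of set partitions up to reflection. Each stratum is $(\CC^*)^{n-j}$: on each component with nodes at $0,\infty$, the positions of its points modulo the $\CC^*$-scaling form a torus. For $j\ge3$ the automorphism group is trivial, which gives $\gamma_{n,j}(uv-1)^{n-j}$.

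For $j=1,2$, the reflection $z\mapsto z^{-1}$ of the necklace acts nontrivially, and we take the $S_2$-invariant part. On $(\CC^*)^{m}$ with the inversion, $H^*$ is $\Lambda^*(\QQ(-1)[-1])^{m}$ and the involution acts by $-1$ on each generator. The invariant $E$-polynomial is therefore $\frac{(uv+1)^m+(uv-1)^m}{2}$ up to sign conventions; here $m=n-1$ for $j=1$ and $m=n-2$ for $j=2$. This gives \eqref{37}. Note that $\gamma_{n,2}=2^{n-1}-1$, because for two vertices reflection already identifies ordered splittings. I expect the main obstacle to be this boundary bookkeeping: correctly identifying the strata and their stabilizers (especially for $j=1,2$) and the exact sign of the invariant part. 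The cusp/Eisenstein split of Lemma \ref{35} is the other delicate point, but we take it as given.
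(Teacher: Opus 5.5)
Your proposal is correct and is essentially the paper's own proof. Both use additivity over $\bcM_{1,(0^+)^n}=\cI_n\sqcup\cB_n$; then the relative K\"unneth decomposition with odd-degree vanishing, Clebsch--Gordan and Lemma \ref{35} for $\cI_n$; the necklace stratification, taking $S_2$-invariants for $j=1,2$, for $\cB_n$; and finally Theorem \ref{17} for $\bH_{n|0}$. Your hedge ``up to sign conventions'' is unnecessary: on $H^*_c(\CC^*)$ the inversion acts by $+1$ on the $uv$ class and by $-1$ on the $-1$ class, which gives exactly $\frac{(uv+1)^m+(uv-1)^m}{2}$.
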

Letting $u=v=-t$, we get the Poincar\'e polynomial of $\bcM_{1,n}$. Note that only the last line in \eqref{38} has odd degree terms. 

It is straightforward to implement the above theorem into a computer program and we can compute the Hodge-Deligne and Poincar\'e polynomials of $\bcM_{1,n}$ for large $n$ within a relatively short period of time. 

\medskip

Motivated by recent discoveries on the cohomology of $\bcM_{0,n}$ \cite{BK, BKs,CK,CKL1,CKL2,CKL3}, we raise the following. 
\begin{ques}
(1)  Can we refine the computations in this paper to prove a new recursive formula for the $S_n$-equivariant Hodge-Deligne polynomial of $\bcM_{1,n}$? 

(2) Is the even degree Poincar\'e polynomial $P^{\mathrm{even}}_{\bcM_{1,n}}(x)=\sum_{i=0}^n x^i\dim H^{2i}(\bcM_{1,n})$ real-rooted?

(3) Do the coefficients of $P^{\mathrm{even}}_{\bcM_{1,n}}(x)$ form a log-concave or $k$-ultra log-concave sequence for some $k$?

(4) Does the probability distribution of the coefficients of $P^{\mathrm{even}}_{\bcM_{1,n}}(x)/P^{\mathrm{even}}_{\bcM_{1,n}}(1)$ converge to a Gaussian distribution? 

(5) Can we find effective estimates of the Betti numbers of $\bcM_{1,n}$ for large $n$? 

(6) Can we also use the method in this paper to calculate the Hodge-Deligne and Poincar\'e polynomials of $\bcM_{g,n}$ for $g\ge 2$? Note that Theorem \ref{17} holds for all $g\ge 1$ and hence it suffices to calculate the Hodge-Deligne polynomial of $\bcM_{g,0|n}$.
\end{ques}

In subsequent papers, we will continue our investigation to seek answers to the above questions. 

\medskip

\noindent\textbf{Convention}. 
All schemes and stacks in this paper are defined over the complex number field $\CC$
and all cohomology groups have rational coefficients. 

\medskip

\noindent\textbf{Acknowledgement}. It is my pleasure to thank Jinwon Choi and Han-Bom Moon for useful discussions. 

\bigskip

\section{Moduli spaces of weighted pointed curves and Hodge-Deligne polynomials}
In this section, we collect necessary facts about Hassett moduli spaces \cite{Hst} and Hodge-Deligne polynomials that will be used throughout this paper. 

\begin{defi} \cite{Hst}
Let $g, n\ge 0$. For $\cA=(a_1,\cdots,a_n)\in (0,1]^n$ with $2g-2+\sum_i a_i>0$, an integral curve $C$ with marked points $p_1, \cdots,p_n\in C$ is called $\cA$-\emph{stable} if \begin{enumerate}
\item $C$ has at worst nodal singularities and $p_i$ are all smooth points;
\item if $p_{i_1}=p_{i_2}=\cdots=p_{i_k}$, then $a_{i_1}+\cdots+a_{i_k}\le 1$;
\item $\omega_C(a_1p_1+\cdots +a_np_n)$ is ample. 
\end{enumerate}
\end{defi}
Let $\bcM_{g,\cA}$ denote the moduli stack of $\cA$-stable $n$-pointed curves of genus $g$. 
In particular, when $a_i>1/2$ for all $i$, $\bcM_{g,\cA}$ coincides with the Knudsen-Deligne-Mumford moduli stack $\bcM_{g,n}$ of stable curves of genus $g$ with $n$ marked points \cite{DM,Knu}.

\medskip

Let us summarize useful facts about $\bcM_{g,\cA}$ from \cite{Hst}.
\begin{theo}\label{2} 
Let $\cA=(a_1,\cdots,a_n)\in (0,1]^n$ with $2g-2+\sum_{i=1}^n a_i>0$.

(1) $\bcM_{g,\cA}$ is a smooth connected proper Deligne-Mumford stack of dimension $3g-3+n$ with projective coarse moduli space.  

(2) If $\cA=(a_1,\cdots,a_n)$, $\cA'=(a'_1,\cdots, a'_n)$ and $a_i\ge a'_i$ for all $i$, then we have a natural morphism 
\beq\label{1} \rho_{\cA,\cA'}:\bcM_{g,\cA}\lra \bcM_{g,\cA'}\eeq 
which is obtained by contracting rational components in an $\cA$-stable curve which are not $\cA'$-stable. 

(3) For $2<r\le n$, if $a'_{i_1}+\cdots+a'_{i_r}\le 1$  and $a_{i_1}+\cdots+a_{i_r}>1$ but any proper subset  of $\{a_{i_1},\cdots, a_{i_r}\}$ has sum at most one while $a_j=a_j'$ for $j\ne i_1,\cdots,i_r$, then the morphism $\rho_{\cA,\cA'}$ is the smooth blowup along the locus $\bcM_{g,\bar \cA'}$ in $\bcM_{g,\cA'}$ where $p_{i_1}=\cdots=p_{i_r}$ and $\bar \cA'$ is obtained from $\cA'$ by replacing $a'_{i_1}, \cdots, a'_{i_r}$ by their sum. 

(4) If $r\le 2$ for any $I\subset [n]$ such that $a_{i_1}+\cdots+a_{i_r}>1$ and $a'_{i_1}+\cdots+a'_{i_r}\le 1$, then $\rho_{\cA,\cA'}$ is an isomorphism. 
\end{theo}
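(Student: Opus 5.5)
We follow Hassett \cite{Hst}, whose arguments we sketch; nothing later in the paper depends on the details.

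\textbf{Part (1).} Deformation theory handles smoothness. An $\cA$-stable curve is nodal, and its marked points are smooth points. Hence the deformation functor of $(C,p_1,\dots,p_n)$ is the same as the one for ordinary pointed nodal curves, and the obstruction space $\mathrm{Ext}^2(\Omega_C(\sum p_i),\mathcal O_C)$ vanishes. The tangent space has dimension $3g-3+n$. Ampleness of $\omega_C(\sum a_ip_i)$ forces every rational component to carry enough special points, so automorphism groups are finite and unramified, and the stack is Deligne--Mumford.

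Properness comes from a weighted stable reduction. Given a family over a punctured disc, first allow all points to be distinct by passing to $\bcM_{g,n}$ and using its properness. Then take the relative log canonical model
\[
\mathrm{Proj}\bigoplus_k \pi_*\big(\omega^{k}(k\textstyle\sum a_ip_i)\big).
\]
Separatedness follows from the uniqueness of log canonical models.

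Projectivity of the coarse space follows from Koll\'ar's semipositivity method applied to the line bundle $\det\pi_*(\omega(\sum a_ip_i))^{\otimes k}$.

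Connectedness holds because the open locus of smooth curves with distinct points is dense. It is dense since every boundary stratum is smoothable, and deformations are unobstructed. This open locus is a dominant image of the irreducible space $\cM_{g,n}$.

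\textbf{Part (2).} We construct $\rho_{\cA,\cA'}$ on families. Let $\pi:C\to S$ be a family of $\cA$-stable curves, and set
\[
C' := \mathrm{Proj}_S\bigoplus_k \pi_*\big(\omega_{C/S}^{k}(k\textstyle\sum a'_ip_i)\big).
\]
The bundle $\omega(\sum a_i'p_i)$ is nef on each fibre. It has degree $0$ exactly on the rational components that fail $\cA'$-stability; these are chains and trees with too little weight. Higher cohomology vanishes, so the direct images commute with base change, and $C'\to S$ is a flat family of $\cA'$-stable curves in which those components are contracted.

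\textbf{Part (3).} This is the main step. Set $I=\{i_1,\dots,i_r\}$. The locus where $\rho_{\cA,\cA'}$ fails to be an isomorphism is the boundary divisor $D_I\subset\bcM_{g,\cA}$. It consists of curves with a rational tail $R$ that carries exactly the points in $I$. The hypothesis that every proper subset of $I$ has weight at most one implies two things:
\begin{itemize}
\item $D_I\cong \bcM_{g,\bar\cA'}\times \bcM_{0,(a_{i_1},\dots,a_{i_r},1)}$;
\item the second factor is $(\CC^r\setminus\{\text{total diagonal}\})/\mathrm{Aff}^1\cong\PP^{r-2}$, because points of $R$ may collide in any proper subset.
\end{itemize}
The map $\rho$ contracts $D_I$ onto the locus $\bcM_{g,\bar\cA'}$, which is smooth of codimension $r-1$, and $\rho$ is an isomorphism off $D_I$.

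To identify $\rho$ with the blowup, we check that the fibre $\PP^{r-2}$ is canonically $\PP(N)$. Here $N$ is the normal bundle of the coincidence locus, namely $N=T^{\oplus r}/\mathrm{diag}$ with $T$ the tangent line at the common point. We also check that $\mathcal O(-D_I)$ restricts to $\mathcal O_{\PP(N)}(1)$ on fibres. Then the universal property of blowing up gives a morphism $\bcM_{g,\cA}\to \mathrm{Bl}$, since the pulled-back ideal is the Cartier ideal of $D_I$. This morphism is birational and bijective between smooth spaces, so it is an isomorphism by Zariski's main theorem.

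\textbf{Part (4).} When $r=2$, the tail carrying two points and one node has no moduli. So $D_I\to \bcM_{g,\bar\cA'}$ is bijective, and $\rho$ is a proper birational bijection of smooth stacks. Hence $\rho$ is an isomorphism, again by Zariski's main theorem.

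The hardest point is the normal bundle identification in part (3). We would do it by a local computation of the deformations of the tail near $p_{i_1}=\dots=p_{i_r}$, with the smoothing parameter of the node corresponding to the blowup coordinate.
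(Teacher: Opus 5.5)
The paper proves nothing here itself: it cites Hassett (Theorem 2.1, Theorem 4.1, Remark 4.6, Corollary 4.7). Following Hassett is therefore the same route, and your outlines of (1), (3), (4) are broadly in line with his.

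Your sketch of (2), however, rests on a false claim: that $L=\omega_{C/S}(\sum_i a_i'p_i)$ is nef on every fibre, with degree $0$ exactly on the components to be contracted. On a one-node rational tail $R$ carrying the points $J$, one has $\deg L|_R=-1+\sum_{i\in J}a_i'$. This is negative as soon as $\cA'$ lies strictly past the wall $\sum_{i\in J}a_i=1$, which (3) allows. Nested tails also break the ``exactly''.

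For a concrete case, let $g=1$, $\cA=(1,1,1)$, $\cA'=(\frac13,\frac13,\frac13)$. Let $C=E\cup R_1\cup R_2$, with $R_1$ attached to $E$, $p_1\in R_1$, and $p_2,p_3$ on a tail $R_2$ meeting $R_1$ at $q$. Then $\deg L|_{R_2}=-\frac13$ and $\deg L|_{R_1}=\frac13$. Take $3\mid k$ and $L_k=\omega_C^{k}(\frac k3(p_1+p_2+p_3))$.
\begin{itemize}
\item Every section of $L_k$ vanishes on $R_2$.
\item Restriction maps $H^0(C,L_k)$ onto $H^0(R_1,L_k|_{R_1}(-q))$, which has degree $\frac k3-1>0$ for $k\geq 6$.
\item Hence the $\mathrm{Proj}$ of the section ring contracts $R_2$ but not $R_1$. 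It leaves a tail of $\cA'$-weight exactly $1$, which is not $\cA'$-stable; the correct model has all three points coinciding on $E$.
\item Moreover $H^1(C,L_k)$ surjects onto $H^1(R_2,\mathcal O(-\frac k3))\neq 0$ for $k\geq 6$, while it vanishes on nearby smooth fibres. So $\pi_*L_k$ does not commute with base change.
\end{itemize}
Thus the one-step relative $\mathrm{Proj}$ does not define $\rho_{\cA,\cA'}$.

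What is missing is the factorization implicit in Hassett's description of $\rho$ as \emph{successively} collapsing components. Move along the segment from $\cA$ to $\cA'$ and stop at each of the finitely many walls $\sum_{i\in J}a_i=1$. Let $\cA_1$ be the first wall point reached.
\begin{itemize}
\item Every one-node rational tail of an $\cA$-stable curve has $\cA_1$-weight $\geq 1$, and components with at least two nodes have positive degree.
\item So $\omega(\sum_i a_{1,i}p_i)$ is nef, with degree $0$ exactly on tails of $\cA_1$-weight $1$.
\item Serre duality gives $H^1(C,L_k)=0$ for $k\geq 2$ with $ka_{1,i}\in\ZZ$, and your $\mathrm{Proj}$ construction works for this step.
\end{itemize}
A tail of weight exactly $1$ is already unstable on the wall, so the stack is unchanged between a wall and the chamber just below it. Then $\rho_{\cA,\cA'}$ is the composite of these finitely many contractions.

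Two smaller points.
\begin{itemize}
\item In the properness argument, sections that coincide over the punctured disc must first be merged into one section of summed weight before you invoke some $\bcM_{g,m}$.
\item In (3), your assertion that $\rho$ is an isomorphism off $D_I$ needs $I$ to be the only subset crossing the wall. The hypotheses as literally stated do not guarantee this: for $\cA=(0.6,0.3,0.3,0.25,0.25)$ and $\cA'=(0.3,0.3,0.3,0.25,0.25)$, the subset $\{1,4,5\}$ crosses too. Uniqueness is implicit in Hassett's setting, and the paper's later use blows up several such loci at once.
\end{itemize}
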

\begin{proof}
    (1) is \cite[Theorem 2.1]{Hst} and (2) is \cite[Theorem 4.1]{Hst} while (3) is \cite[Remark 4.6]{Hst} and (4) is \cite[Corollary 4.7]{Hst}. 
\end{proof}

\medskip

Let $\bH$ be a motivic invariant with values in an integral domain $R$ which assigns 
to each variety $X$ over $\CC$ an element $\bH_X\in R$ such that  
\begin{enumerate}
    \item $\bH_X=\bH_{X-Z}+\bH_Z$ for any closed $Z\subset X$ and
    \item $\bH_{X\times Y}=\bH_X\cdot \bH_Y.$
\end{enumerate}
If $X\to Y$ is a Zariski locally trivial morphism with $Y$ connected, then $\bH_X=\bH_Y\cdot \bH_F$ where $F$ denotes a fiber. 

In this paper, we will only consider motivic invariants satisfying the following \emph{blowup formula}: If $X\to Y$ is the smooth blowup along $Z$ of codimension $c$, then 
\beq\label{12}
\bH_X=\bH_Y + \bH_Z \cdot (\bH_{\PP^{c-1}}-1).\eeq 

Our main example is the Hodge-Deligne polynomial
$$\bH_X(u,v)=\sum_k (-1)^k\sum_{p,q}u^pv^q\dim H^{p,q;k}_c(X) \in \ZZ[u,v], \quad H^{p,q;k}_c(X)=\mathrm{gr}^p_F\mathrm{gr}_{p+q}^W H^k_c(X)$$
where $\mathrm{gr}_F^p$ and $\mathrm{gr}^W_{p+q}$ denote the graded parts with respect to the Hodge filtration and weight filtration respectively. 
By letting $u=v=-t$, we obtain the virtual Poincar\'e polynomial
$$P_X(t)=\bH_X(-t,-t)\in \ZZ[t]$$
of $X$ which is also a motivic invariant.


\bigskip

\section{Reduction to minimally weighted curves}
In this section, we prove an explicit formula (Theorem \ref{17}) that calculates the motivic invariants of $\bcM_{g,n}=\bcM_{g,1^n}$ from those of $\bcM_{g,(0^+)^n}$ where $(0^+)^n\in (0,1]^n$ denotes an element sufficiently close to zero, for $g\ge 1$. 

Fix $g\ge 1$ and $n\ge 1$. Then $\bcM_{g,\cA}$ is a nonempty proper smooth Deligne-Mumford stack for any $\cA\in (0,1]^n$. 
For $0\le r< n$ and $0\le s\le n-r$, let
\beq\label{8} \cAn_{r,s}=(1,\cdots,1,\frac{s}{n-r},\frac1{n-r},\cdots,\frac1{n-r})\in (0,1]^n\eeq
where $1$ is repeated $r-1$ times and $\frac1{n-r}$ is repeated $n-r$ times. 
For instance, $\cAn_{0,s}=(\frac1n,\cdots,\frac1n)$ for any $s$. 
Let 
\beq\label{9} \Yn_{r,s}:=\bcM_{g,\cAn_{r,s}}.\eeq 
In particular, $\Yn_{0,n}=\bcM_{g,(0^+)^n}$ and 
\beq\label{11} \Yn_{n-2,2}=\Yn_{n-1,1}=\bcM_{g,n}.\eeq
Moreover, by Theorem \ref{2} (4),  
\beq\label{15} \Yn_{r,n-r-1}=\Yn_{r,n-r}=\bcM_{g,(1^r,(0^+)^{n-r})}=:\bcM_{g,r|n-r}\eeq
is the moduli space of stable curves of genus $g$ with $r$ heavy points and $n-r$ light points \cite{KSY}. 

Note that if $r>r'$ or $r=r'$ and $s>s'$, then $\cAn_{r,s}- \cAn_{r',s'}\ge 0$ and hence we have the natural morphism
$$\rho_{(r,s),(r',s')}:\Yn_{r,s}\lra \Yn_{r',s'}$$
by Theorem \ref{2} (2). 

Fix a motivic invariant $\bH$ and let 
\beq\label{14} \bH_{r|s}:=\bH_{\bcM_{g,r|s}}=\bH_{Y^{(r+s)}_{r,s}}\eeq 
be the motivic invariant 
of $\bcM_{g,r|s}$. 
Then we have $\bH_{0|n}=\bH_{\bcM_{g,(0^+)^n}}$ and $\bH_{n|0}=\bH_{\bcM_{g,n}}$ by \eqref{15}.

Now our goal is to reach $\Yn_{n-2,2}=\bcM_{g,n}$ starting from $\Yn_{0,n}=\bcM_{g,(0^+)^n}$ by smooth blowups so that we can reduce the computation of $\bH_{n|0}$ to $\bH_{0|n}$. Let us first reach $\Yn_{1,n-1}$. 
The natural map
$$\rho_{\cAn_{1,1},\cAn_{0,n}}:\Yn_{1,1}\lra \Yn_{0,n}$$
is the smooth blowup along the locus $\bcM_{g,1|0}=\bcM_{g,1}$ of $p_1=p_2=\cdots =p_n$ by Theorem \ref{2} (3). 
Likewise, for $1\le k\le n-3$, the natural map 
$$\rho_{\cAn_{1,k+1},\cAn_{1,k}}:\Yn_{1,k+1}\lra \Yn_{1,k}$$
is the smooth blowup along the locus of $p_1=p_{i_1}=\cdots=p_{i_{n-k-1}}$ for $\{i_1,\cdots,i_{n-k-1}\}\subset \{2,\cdots,n\}$ which consists of $\binom{n-1}{k}$ copies of $\bcM_{g,1|k}$.
Note that $\Yn_{1,n-2}=\Yn_{1,n-1}$ by \eqref{15}. 
By applying the blowup formula \eqref{12} to these morphisms, we find that 
\beq\label{13}
\bH_{1|n-1}=\bH_{0|n}+\sum_{k=0}^{n-3} \binom{n-1}{k} \bH_{1|k} (\bH_{\PP^{n-k-2}}-1).
\eeq

Next, let us reach $\Yn_{2,n-2}$ from $\Yn_{1,n-1}$. 
The natural map
$$\rho_{\cAn_{2,1},\cAn_{1,n-1}}:\Yn_{2,1}\lra \Yn_{1,n-1}$$
is the smooth blowup along the locus $\bcM_{g,2|0}=\bcM_{g,2}$ of $p_2=p_3=\cdots =p_n$ by Theorem \ref{2} (3). 
For $1\le k\le n-4$, the natural map 
$$\rho_{\cAn_{2,k+1},\cAn_{2,k}}:\Yn_{2,k+1}\lra \Yn_{2,k}$$
is the smooth blowup along the locus of $p_2=p_{i_1}=\cdots=p_{i_{n-k-2}}$ for $\{i_1,\cdots,i_{n-k-2}\}\subset \{3,\cdots,n\}$ which consists of $\binom{n-2}{k}$ copies of $\bcM_{g,2|k}$.
We have $\Yn_{2,n-3}=\Yn_{1,n-2}$ by \eqref{15} and the blowup formula \eqref{12} gives us 
\beq\label{16}
\bH_{2|n-2}=\bH_{1|n-1}+\sum_{k=0}^{n-4} \binom{n-2}{k} \bH_{2|k} (\bH_{\PP^{n-k-3}}-1).
\eeq
Repeating this way, we obtain the following. 
\begin{theo}\label{17} For $r,s\ge 0$ with $r+s>0$ and $n>0$, the following hold:
   \beq\label{18} \bH_{r|s} = \bH_{0|r+s} + \sum_{j=1}^{\min(r,r+s-2)} \sum_{k=0}^{r+s-j-2} \binom{r+s-j}{k}\, \bH_{j|k}  \cdot (\bH_{\PP^{r+s-j-k-1}}-1).\eeq
   \beq\label{19} \bH_{\bcM_{g,n}}= \bH_{n|0} = \bH_{0|n} + \sum_{j=1}^{n-2} \sum_{k=0}^{n-j-2} \binom{n-j}{k}\, \bH_{j|k} \cdot (\bH_{\PP^{n-j-k-1}}-1).\eeq
All $\bH_{r|s}$ are recursively determined by \eqref{18} if we know $\bH_{0|n}=\bH_{\bcM_{g,(0^+)^n}}$ for all $n$.   
\end{theo}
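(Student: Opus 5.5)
The plan is to iterate the two steps already carried out in \eqref{13} and \eqref{16}: for each number $j$ of heavy points, prove a one-step formula and then telescope. Fix $N=r+s$ and work inside the chain of spaces $Y^{(N)}_{j,t}$, ordered lexicographically in $(j,t)$. For each $1\le j\le N-2$, the step from $Y^{(N)}_{j-1,N-j+1}=\bcM_{g,j-1|N-j+1}$ to $Y^{(N)}_{j,N-j}=\bcM_{g,j|N-j}$ is the composite
$$Y^{(N)}_{j,N-j-1}\to Y^{(N)}_{j,N-j-2}\to\cdots\to Y^{(N)}_{j,1}\to Y^{(N)}_{j-1,N-j+1}.$$
Here the first map is an isomorphism by \eqref{15}, and the identification $Y^{(N)}_{j,0}=Y^{(N)}_{j-1,N-j+1}$ is also given by \eqref{15}. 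At the $k$-th stage ($0\le k\le N-j-2$, with $k=0$ the first map $Y^{(N)}_{j,1}\to Y^{(N)}_{j-1,N-j+1}$), the weight of $p_j$ rises from $k/(N-j)$ to $(k+1)/(N-j)$.

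The first key step is to check the hypotheses of Theorem \ref{2} (3) at each stage. The only new subsets whose weight sum crosses $1$ are $\{p_j\}\cup S$ with $S\subset\{j+1,\dots,N\}$ and $|S|=N-j-k$. Indeed, the sum equals $\frac{k+1}{N-j}+\frac{|S|}{N-j}$, so it crosses $1$ exactly when $|S|=N-j-k-1$, i.e. the subset has $N-j-k\ge 2$ elements. Every proper subset has sum at most $1$. No subset containing a heavy point is affected. Subsets of light points alone never exceed $(N-j)/(N-j)=1$, and any subset containing a heavy point other than $p_j$ already exceeded $1$.

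When $r=N-j-k\le 2$ the map is an isomorphism by Theorem \ref{2} (4), which is why $k$ stops at $N-j-2$. Otherwise the map is a smooth blowup along the disjoint union of the loci where the points in $S$ collide with $p_j$. There are $\binom{N-j}{k}$ such loci, indexed by the complement of $S$, i.e. the $k$ surviving light points. Each locus is $\bcM_{g,\bar\cA'}$, where $p_j$ and $S$ merge into a single point of weight $1$. Up to relabeling and Theorem \ref{2} (4), this space is $\bcM_{g,j|k}$: it has $j$ weight-one points and $k$ light points of weight $\le 1/(N-j)$, and these can be moved to $0^+$ without crossing walls because any collision of light points with a heavy point is already forbidden.

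The codimension is $(3g-3+N)-(3g-3+j+k)=N-j-k$, so the exceptional fibers are $\PP^{N-j-k-1}$. Disjointness of the loci lets us apply \eqref{12} componentwise, and summing over $k$ gives
$$\bH_{j|N-j}=\bH_{j-1|N-j+1}+\sum_{k=0}^{N-j-2}\binom{N-j}{k}\bH_{j|k}(\bH_{\PP^{N-j-k-1}}-1).$$

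Telescoping this from $j=1$ up to $j=\min(r,N-2)$ gives \eqref{18}. If $r\le N-2$ the telescope ends exactly at $\bH_{r|s}$. If $r\in\{N-1,N\}$, we use $Y^{(N)}_{N-2,2}=Y^{(N)}_{N-1,1}=\bcM_{g,N}$ from \eqref{11} together with \eqref{15} to identify $\bH_{N-1|1}=\bH_{N|0}=\bH_{N-2|2}$. This also gives \eqref{19} as the case $r=n$, $s=0$. The sum in \eqref{18} involves $\bH_{j|k}$ only with $j+k\le N-2<N$, so induction on $N$ shows that all $\bH_{r|s}$ are determined by the $\bH_{0|m}$.

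The main obstacle is the bookkeeping in the identification step. One has to verify that the blowup center really is (a disjoint union of copies of) $\bcM_{g,j|k}$ as stacks, including the relabeling and the reduction of light weights to $0^+$. One also has to check that Hassett's blowup description in Theorem \ref{2} (3) applies simultaneously to several disjoint subsets crossing the wall. Disjointness is clear, since two distinct $S$ of the same size together with $p_j$ would force weight $>1$ at a point; I would handle the simultaneous crossing by citing the local nature of Hassett's wall-crossing.
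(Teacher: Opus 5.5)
Your proposal is correct and follows the paper's argument: iterate the blowup steps \eqref{13} and \eqref{16} for each $j$, apply \eqref{12} to the $\binom{N-j}{k}$ disjoint copies of $\bcM_{g,j|k}$, and telescope. Your extra checks (disjointness, identification of the centers, the cases $r\in\{N-1,N\}$, and induction on $r+s$) fill in what the paper leaves as ``repeating this way.'' One slip should be fixed: the colliding set is $\{p_j\}\cup S$ with $|S|=N-j-k$, so it has $N-j-k+1$ elements rather than $N-j-k$, and the isomorphism case of Theorem~\ref{2}~(4) is $k=N-j-1$, i.e.\ the map $Y^{(N)}_{j,N-j}\to Y^{(N)}_{j,N-j-1}$. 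Also, the $k=0$ step $Y^{(N)}_{j,1}\to Y^{(N)}_{j-1,N-j+1}$ changes all the light weights, not just that of $p_j$, so check it directly rather than via an identification $Y^{(N)}_{j,0}=Y^{(N)}_{j-1,N-j+1}$, which \eqref{15} does not provide.
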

In particular, the motivic invariants $\bH_{\bcM_{g,n}}=\bH_{n|0}$ can be recursively calculated by \eqref{18} if we know the motivic invariants $\bH_{\bcM_{g,(0^+)^n}}=\bH_{0|n}$ for all $n\ge 1$. 
Conversely, if we know $\bH_{\bcM_{g,n}}=\bH_{n|0}$ for all $n\ge 1$, we can recursively calculate $\bH_{\bcM_{g,(0^+)^n}}=\bH_{0|n}$ for all positive $n$ by \eqref{18}.

\bigskip

\section{Moduli spaces of minimally weighted curves}
In this section, we let $g=1$ and let $\bH$ be the Hodge-Deligne polynomial. 
We find an explicit formula that calculates the Hodge-Deligne polynomials of the moduli spaces $\bcM_{1,(0^+)^n}$
of stable curves of genus 1 with $n$ marked points whose weights are $\le  1/n$. By Theorem \ref{17}, we can then calculate $\bH_{\bcM_{1,n}}$
for all $n>0$.

Note that in $\bcM_{1,(0^+)^n}$, all the marked points are allowed to coincide as long as they stay away from the nodes. 
Let 
$$\pi=\pi_n:\bcM_{1,(0^+)^n}\lra \bcM_{1,0^+}=\bcM_{1,1}$$
be the morphism that forgets all the marked points except the first. Using the stratification 
$$\bcM_{1,1}=\cM_{1,1}\sqcup \{\infty\}$$
where $\infty$ parameterizes the unique singular rational curve, we have the decomposition
$$\bcM_{1,(0^+)^n}=\cI_n\sqcup \cB_n, \quad \cB_n=\pi_n^{-1}(\infty), \ \ \cI_n=\pi_n^{-1}(\cM_{1,1})$$
into the interior and the boundary.

Let $I_n=\bH_{\cI_n}$ and $B_n=\bH_{\cB_n}$ be the Hodge-Deligne polynomials of $\cI_n$ and $\cB_n$ respectively so that 
$$\bH_{0|n}= \bH_{\bcM_{1,(0^+)^n}}=I_n+B_n $$
is the Hodge-Deligne polynomial of $\bcM_{1,(0^+)^n}$.

\subsection{The Hodge-Deligne Polynomial $I_n$ of $\cI_n$} 
In this subsection, we prove the following. 
\begin{prop}\label{3} $I_n(u,v)=I_n^{\mathrm{even}}(u,v)+I^{\mathrm{odd}}_n(u,v)$ with  
\[
I_n^{\mathrm{even}}(u,v) = 
\sum_{j=0}^{\lfloor\frac{n-1}{2}\rfloor} \binom{n-1}{2j} (uv+1)^{n-1-2j} \left( \alpha_{jj}(uv)^{j+1} - \sum_{i=0}^{j-1} \alpha_{ij}(uv)^i\right),
\]
\[I_n^{\mathrm{odd}}(u,v)=-\sum_{j=0}^{\lfloor\frac{n-1}{2}\rfloor} \binom{n-1}{2j} (uv+1)^{n-1-2j}  \sum_{i=0}^{j-1} \alpha_{ij}\beta_{2j-2i+2}(uv)^i(u^{2j-2i+1}+v^{2j-2i+1})  \]
where \beq\label{21} \alpha_{ij}=\binom{2j}{i}-\binom{2j}{i-1}
\and 
\beta_k=\left\{\begin{matrix}  \lfloor \frac{k}{12}\rfloor -1 & \text{if } k\equiv 2 \text{ mod } 12\\
\lfloor \frac{k}{12}\rfloor & \text{if } k\not\equiv 2 \text{ mod } 12 .\end{matrix}\right.
\eeq
\end{prop}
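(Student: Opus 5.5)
The plan is to compute $I_n=\bH_{\cI_n}$ by pushing forward to $\cM_{1,1}$ and summing the $E$-polynomials of compactly supported cohomology of local systems. Because all $n$ points carry weight $0^+$, they may coincide freely on a smooth genus-one curve. Hence $\pi_n|_{\cI_n}:\cI_n\to\cM_{1,1}$ is the $(n-1)$-fold fiber product $\cC\times_{\cM_{1,1}}\cdots\times_{\cM_{1,1}}\cC$ of the universal curve, with the first point serving as the origin. Since $q$ is smooth and proper, $Rq_*\QQ_\cC\cong\QQ\oplus V[-1]\oplus\QQ(-1)[-2]$ as variations of Hodge structure; this splitting holds by Deligne's degeneration, or more simply because $\pm1$-multiplication acts differently on the pieces. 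The relative Künneth formula then gives \eqref{34}, with the Tate twists kept. The Leray spectral sequence for $R\pi_!=R\pi_*$ is a spectral sequence of mixed Hodge structures, and $E$-polynomials are additive Euler characteristics. So I do not need degeneration: $I_n$ equals the sum, over summands $L[-d]$ of $R\pi_*\QQ$, of $(-1)^d\,\bH(H^*_c(\cM_{1,1},L))$. I will work with rational coefficients on the coarse space, or equivalently on the stack; these agree rationally.

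Next I expand \eqref{34}. I choose which $2j$ of the $n-1$ factors are $V[-1]$; summands with an odd number of $V$'s contribute nothing, since $-I$ acts by $-1$. The remaining $n-1-2j$ factors contribute $(1+uv)^{n-1-2j}$, and the shift $[-2j]$ has even sign, which gives the factor $\binom{n-1}{2j}(uv+1)^{n-1-2j}$. By Clebsch--Gordan for $SL_2$, compatibly with Hodge structures through the symplectic pairing $V\otimes V\to\QQ(-1)$, I get
\[V^{\otimes 2j}\cong\bigoplus_{i=0}^{j}\mathrm{Sym}^{2j-2i}V(-i)^{\oplus\alpha_{ij}},\qquad \alpha_{ij}=\binom{2j}{i}-\binom{2j}{i-1}.\]
I will check these multiplicities by the standard character computation, namely the multiplicity of weight $2j-2i$ minus that of weight $2j-2i+2$. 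The twist $(-i)$ contributes the factor $(uv)^i$.

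Then I need the $E$-polynomial of $H^*_c(\cM_{1,1},\mathrm{Sym}^{2m}V)$ (cf. Lemma \ref{35}). For $m=0$ it is $uv$, since $H^2_c=\QQ(-1)$ and $H^0_c=H^1_c=0$. For $m>0$, $H^0_c=0$, and $H^2_c$ is dual to $H^0$, which vanishes because $\mathrm{Sym}^{2m}$ has no invariants. So only $H^1_c$ survives, contributing with sign $-1$. By Eichler--Shimura, together with the known mixed Hodge structure, $H^1_c$ is an extension of the pure part $S_{2m+2}\oplus\overline{S_{2m+2}}$, of types $(2m+1,0)$ and $(0,2m+1)$, by a one-dimensional weight-zero Eisenstein piece $\QQ$. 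Its $E$-polynomial is therefore $1+\dim S_{2m+2}(u^{2m+1}+v^{2m+1})$, and $\dim S_k=\beta_k$ by the classical dimension formula for even $k\ge4$ (with $k=2$ handled separately: $m\ge1$ gives $k\ge4$).

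Finally I assemble the pieces. The term $i=j$ (so $m=0$) gives $\alpha_{jj}(uv)^{j+1}$. Each term $i<j$ gives $-\alpha_{ij}(uv)^i\bigl(1+\beta_{2j-2i+2}(u^{2j-2i+1}+v^{2j-2i+1})\bigr)$. Splitting these into even and odd parts yields exactly $I_n^{\mathrm{even}}$ and $I_n^{\mathrm{odd}}$. I expect the main obstacle to be the precise mixed Hodge structure on $H^1_c(\cM_{1,1},\mathrm{Sym}^{2m}V)$. In particular I must confirm that the Eisenstein part has weight $0$ (and not $2m+2$) for compact supports, and that its dimension is exactly $1$ because there is a single cusp with the $-I$ ambiguity handled. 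For this I would invoke Deligne's/Zucker's computation together with the exact sequence $0\to H^0(\partial,\cdot)\to H^1_c\to H^1_!\to0$ at the cusp.
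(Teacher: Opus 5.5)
Your proposal is correct and follows essentially the same route as the paper. Both arguments use relative K\"unneth for the $(n-1)$-fold fiber product, kill odd tensor powers via $-I$, apply Clebsch--Gordan, and compute $H^*_c(\cM_{1,1},\mathrm{Sym}^{2m}V)$ by Eichler--Shimura, with a weight-zero cusp class and $\dim S_{2m+2}=\beta_{2m+2}$. The paper writes shifts and a direct-sum decomposition where you use Tate twists $(-i)$ and the Leray spectral sequence; yours is a cleaner record of the same computation.
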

See Example \ref{5} for an explicit computation of $I_n$ for $n\le 10$. 

\medskip

Let $q:\cC\to \cM_{1,1}$ denote the universal curve.
By our choice of stability condition, all the marked points are allowed to coincide and hence $\cI_n$ is nothing but the $(n-1)$-fold fiber product 
$$\pi:\cI_n=\cC\times_{\cM_{1,1}}\cC\times_{\cM_{1,1}}\cdots\times_{\cM_{1,1}}\cC\lra \cM_{1,1}$$
which is a smooth projective fibration with fiber $E^{n-1}$ for an elliptic curve $E\in \cM_{1,1}$.  

Let $V=R^1q_*\QQ$. Then $V$ is a local system of rank 2 over $\cM_{1,1}$ and $$Rq_*\QQ\cong (\QQ\oplus\QQ[-2])\oplus V[-1].$$ 
Let $\pi'=\pi|_{\cI_n}$. By the relative K\"unneth theorem, we have 
\beq\label{25}
R\pi'_*\QQ_{\cI_n}\cong ((\QQ\oplus \QQ[-2])\oplus V[-1])^{\otimes n-1}
=\sum_{k=0}^{n-1} \binom{n-1}{k} C_k\otimes V^{\otimes k}[-k]
\eeq
where $C_k=(\QQ\oplus \QQ[-2])^{\otimes n-1-k}$.

Since $\cM_{1,1}\cong SL_2(\ZZ)\backslash \HH$ where $\HH$ denotes the upper half plane and the generic stabilizer $-I\in SL_2(\ZZ)$ acts on $V^{\otimes k}$ as $(-1)^k$, we have the vanishing 
$$H^*_c(\cM_{1,1},V^{\otimes k})=0, \quad \forall \text{odd } k.$$  
Hence \eqref{25} gives us
\beq\label{26}
H^*_c(\cI_n)=H^*_c(\cM_{1,1},R\pi'_*\QQ_{\cI_n})
\cong \sum_{j=0}^{\lfloor \frac{n-1}{2}\rfloor} \binom{n-1}{2j}  C_{2j}\otimes H^*_c(\cM_{1,1},V^{\otimes 2j})[-2j].\eeq

By the Clebsch-Gordan rule and \eqref{21}, we have 
$$V^{\otimes 2j}\cong \sum_{i=0}^{j-1} \alpha_{ij} V_{2j-2i}[2j-2i]\oplus \alpha_{jj}\QQ, \quad \text{where} \ \ V_{2k}=\mathrm{Sym}^{2k}V.$$
Therefore \eqref{26} gives us 
\beq\label{27}
H^*_c(\cI_n)\cong 
\sum_{j=0}^{\lfloor \frac{n-1}{2}\rfloor} \binom{n-1}{2j}  C_{2j}\otimes 
\left( \alpha_{jj}H^*_c(\cM_{1,1},\QQ)[-2j]+\sum_{i=0}^{j-1} \alpha_{ij} H^*_c(\cM_{1,1},V_{2j-2i})[-2i]\right).\eeq
Since $H^*_c(\cM_{1,1},\QQ)\cong \QQ[-2]$, Proposition \ref{3} follows immediately from the following.

\begin{lemm}\label{35} For $k>0$, we have an isomorphism
    $$H^*_c(\cM_{1,1},V_{2k})=H^1_c(\cM_{1,1},V_{2k})\cong \QQ\oplus (S_{2k+2}\oplus \bar S_{2k+2})$$
where $S_{2k+2}$ denotes the space of cusp forms of weight $2k+2$ with respect to $SL_2(\ZZ)$. The first factor $\QQ$ has weight $0$ and  $S_{2k+2}$ is of Hodge type $(2k+1,0)$. 
\end{lemm}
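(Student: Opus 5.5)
The plan is to reduce to group cohomology of $\Gamma=SL_2(\ZZ)$ and combine Poincaré duality with the Eichler--Shimura isomorphism. The orbifold $\cM_{1,1}=\Gamma\backslash\HH$ has contractible universal cover and finite stabilizers. Hence, with rational coefficients, $H^*(\cM_{1,1},V_{2k})\cong H^*(\Gamma,\mathrm{Sym}^{2k}\QQ^2)$, and this vanishes outside degrees $0,1$ because $\Gamma$ is virtually free. Degree $0$ is the space of invariants, which is zero for $k>0$: already the unipotent element $T=\begin{pmatrix}1&1\\0&1\end{pmatrix}$ fixes only the line spanned by $x^{2k}$, and $S$ does not fix it.

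Poincaré duality on the real $2$-dimensional orbifold gives $H^i_c(\cM_{1,1},V_{2k})\cong H^{2-i}(\cM_{1,1},V_{2k}^\vee)^\vee\otimes\QQ(-1)$, and $V_{2k}^\vee\cong V_{2k}(2k)$ via the symplectic form. So $H^2_c=0$ and $H^0_c=0$ (the latter also because $\cM_{1,1}$ is noncompact and the local system has no invariants), and only $H^1_c$ survives, proving the first equality.

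For $H^1_c$, I use the long exact sequence of the pair given by compactifying at the cusp:
\[0\to H^0(\text{cusp},V_{2k})\to H^1_c(\cM_{1,1},V_{2k})\to H^1(\cM_{1,1},V_{2k})\to H^1(\text{cusp},V_{2k})\to \cdots\]
Here the cusp cohomology means the cohomology of $\langle \pm T\rangle$ acting on $\mathrm{Sym}^{2k}$. The $T$-invariants and coinvariants are each one-dimensional. The image of $H^1_c\to H^1$ is the interior (parabolic) cohomology $H^1_!$, and Eichler--Shimura (with Deligne's mixed Hodge theory refinement, or Zucker) identifies $H^1_!\cong S_{2k+2}\oplus\bar S_{2k+2}$, pure of weight $2k+1$ with Hodge types $(2k+1,0)$ and $(0,2k+1)$. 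The kernel of $H^1_c\to H^1$ is the image of $H^0(\text{cusp},V_{2k})$, the $T$-invariant line spanned by $x^{2k}$, modulo $H^0(\cM_{1,1},V_{2k})=0$. Thus $H^1_c$ is an extension of $S\oplus\bar S$ by $\QQ$.

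It remains to pin down the Hodge--Tate type of this extra class. The invariants of the unipotent monodromy on $\mathrm{Sym}^{2k}$ form the lowest piece of the monodromy weight filtration, so the limit MHS gives $H^0(\text{cusp},V_{2k})$ weight $0$, of type $(0,0)$. The full $H^1$ likewise has an Eisenstein class of weight $4k+2$, type $(2k+1,2k+1)$, which is consistent with duality. Once the weights differ, the extension splits over $\QQ$ as vector spaces (and as Hodge structures after grading), giving the stated isomorphism. As a sanity check, I compare the Euler characteristic: the orbifold Euler characteristic is $\chi=-1/12$, and with corrections from the elliptic points the dimension count is $\dim H^1(\Gamma,V_{2k})=2\dim S_{2k+2}+1$, which equals $\dim M_{2k+2}+\dim S_{2k+2}$, matching the classical dimension formula.

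The main obstacle is the weight assertion for the boundary class. This needs the degeneration at the cusp: the nearby-cycle limit MHS of $\mathrm{Sym}^{2k}$ of the Tate curve, whose $T$-invariants are spanned by the vanishing-cycle power of weight $0$. I would cite Deligne's weights for Eichler--Shimura rather than reprove this.
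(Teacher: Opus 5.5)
Your argument is correct and is essentially the paper's: the cusp exact sequence in which the one-dimensional $\sigma$-invariants (your $T$-invariants) inject into $H^1_c(\cM_{1,1},V_{2k})$, with Eichler--Shimura identifying the quotient with $S_{2k+2}\oplus\bar S_{2k+2}$. The differences are minor. The paper uses the underived $\jmath_*$ in $0\to\jmath_!V_{2k}\to\jmath_*V_{2k}\to(\jmath_*V_{2k})|_\infty\to 0$, so Eichler--Shimura for $H^*(\bcM_{1,1},\jmath_*V_{2k})$ gives the vanishing of $H^0_c$ and $H^2_c$ together with the short exact sequence in one step. You instead use the derived pair sequence plus virtual freeness and Poincar\'e duality. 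You also supply a limit-MHS justification of the weight-$0$ claim, which the paper merely asserts. Note that differing weights give only the graded splitting, not a splitting of mixed Hodge structures, but the graded splitting is all the lemma needs.
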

\begin{proof}
    Let $\jmath:\cM_{1,1}\to \bcM_{1,1}$ denote the inclusion map. Then we have an exact triangle
    $$\jmath_!V_{2k}\lra \jmath_*V_{2k}\lra \jmath_*V_{2k}|_\infty\lra .$$
    By the Eichler-Shimura isomorphism \cite[Theorem 8.4]{Shi}, we have 
    $$H^*(\bcM_{1,1},\jmath_*V_{2k})=H^1(\bcM_{1,1},\jmath_*V_{2k})\cong H^1_{\mathrm{par}}(SL_2(\ZZ),\mathrm{Sym}^{2k}(\CC^2))\cong S_{2k+2}\oplus \bar S_{2k+2}$$ where $S_{2k+2}$ has Hodge type $(2k+1,0)$. 
    We thus find that $H^*_c(\cM_{1,1},V_{2k})=H^1_c(\cM_{1,1},V_{2k})$ and have an exact sequence
\[    0\lra H^0(\infty,\jmath_*V_{2k}|_\infty)\lra H^1_c(\cM_{1,1},V_{2k})\lra S_{2k+2}\oplus \bar S_{2k+2} \lra 0 .\]
As the cusp $\infty$ has the stabilizer group generated by
\[\sigma:=\left( \begin{matrix}
1&1\\ 0&1    
\end{matrix}\right),\]
$H^0(\infty,\jmath_*V_{2k}|_\infty)$ is the $\sigma$-invariant part $(\mathrm{Sym}^{2k}\QQ^2)^\sigma\cong \QQ$. This completes the proof. 
\end{proof}
It is well known that $\dim S_{2k+2}=\beta_{2k+2}$ for $k>0$ by Riemann-Roch, using the notation of \eqref{21}.  

\medskip

\subsection{The Hodge-Deligne Polynomial $B_n$ of $\cB_n$}

Let $\gamma_{n,1}=1$, $\gamma_{n,2}=2^{n-1}-1$ and for $k>2$, let \beq\label{20} \gamma_{n,k} =\left\{\begin{matrix} n\\ k\end{matrix}\right\}\frac{(k-1)!}{2} = \frac{1}{2k} \sum_{i=0}^k (-1)^{k-i} \binom{k}{i} i^n\eeq
where $\left\{\begin{matrix} n\\ k\end{matrix}\right\}$ denotes the Stirling number of the second kind, i.e. the number of ways to partition $[n]=\{1,2,\cdots,n\}$ into $k$ unordered subsets. 
In this subsection, we prove the following. 
\begin{prop}\label{4} 
With $\gamma_{n,k}$ in \eqref{20}, 
\[
B_n(u,v) = \frac{(uv+1)^{n-1}+(uv-1)^{n-1}}{2} + \gamma_{n,2} \frac{(uv+1)^{n-2}+(uv-1)^{n-2}}{2} + \sum_{j=3}^n \gamma_{n,j} (uv-1)^{n-j}
\]
\end{prop}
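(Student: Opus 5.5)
The plan is to stratify the boundary $\cB_n=\pi_n^{-1}(\infty)$ by the topological type of the underlying curve. I will then compute the Hodge-Deligne polynomial of each stratum as the invariant part of the compactly supported cohomology of an algebraic torus under a finite group, and add everything up using property (1) of a motivic invariant.

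\textbf{Step 1 (classification of boundary curves).} Let $(C,p_1,\dots,p_n)$ be $(0^+)^n$-stable of arithmetic genus $1$ with $\pi_n$ of it equal to $\infty$. Then $C$ has no smooth genus-one component, so its dual graph has first Betti number $1$ and all components are rational. A rational tail (a tree hanging off the cycle) would have a terminal component with one node. Ampleness of $\omega_C(\sum a_ip_i)$ would then require total weight $>1$ on that component. This is impossible, since the total weight is $n\cdot 0^+\le 1$. Hence the dual graph is a \emph{necklace}, a cycle of $j$ rational components, $1\le j\le n$. Each $\PP^1$ in the cycle meets the rest in two points (for $j=1$, one component with a self-node). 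Ampleness on such a component holds if and only if it carries at least one marked point. The marked points may coincide freely but must avoid the nodes. So the data are:
\begin{itemize}
\item a partition of $[n]$ into $j$ nonempty blocks, one block for each component;
\item a cyclic arrangement of these blocks, up to rotation and reflection.
\end{itemize}
For $j\ge 3$ the number of such arrangements is $\left\{\begin{smallmatrix} n\\ j\end{smallmatrix}\right\}(j-1)!/2$. For $j=2$ it is the number of unordered $2$-block partitions, $2^{n-1}-1$. For $j=1$ there is exactly one. This recovers $\gamma_{n,j}$ in \eqref{20}. Each type gives a locally closed substack $\cB_{n,j}^\Gamma$ of $\cB_n$, and $B_n=\sum_{j,\Gamma}\bH_{\cB^\Gamma_{n,j}}$.

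\textbf{Step 2 (each stratum).} Fix a type $\Gamma$. On a component $\PP^1$ with nodes at $0$ and $\infty$ carrying $m_i$ points, the points lie in $\CC^*$, modulo the scaling $\CC^*$. This gives $(\CC^*)^{m_i}/\CC^*\cong(\CC^*)^{m_i-1}$. Taking the product gives $(\CC^*)^{n-j}$, modulo the automorphisms of the necklace that fix every component.
\begin{itemize}
\item For $j\ge3$ such an automorphism is trivial: a reflection of a $j$-cycle moves some vertex, and the vertices carry distinct labelled blocks. So $\bH=(uv-1)^{n-j}$.
\item For $j=2$ there is an extra $\ZZ/2$ swapping the two nodes. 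It acts by $z\mapsto z^{-1}$ on every coordinate of $(\CC^*)^{n-2}$.
\item For $j=1$ the $\ZZ/2$ interchanges the two branches of the self-node. It acts in the same way on $(\CC^*)^{n-1}$.
\end{itemize}

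\textbf{Step 3 (the $\ZZ/2$-quotient).} With rational coefficients, $H^*_c$ of the quotient stack equals the $\ZZ/2$-invariant part of $H^*_c((\CC^*)^m)$. By the Künneth formula this is a tensor product of copies of $H^*_c(\CC^*)$, which consists of:
\begin{itemize}
\item $H^1_c(\CC^*)$, one-dimensional of weight $0$, contributing $-1$. The inversion $z\mapsto z^{-1}$ acts on it by $-1$, since it reverses $dz/z$.
\item $H^2_c(\CC^*)$, of type $(1,1)$, contributing $uv$. The inversion preserves orientation, so it acts by $+1$.
\end{itemize}
A Künneth summand containing $k$ factors of $H^1_c$ is invariant exactly when $k$ is even. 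Therefore
\[\bH_{(\CC^*)^m/\ZZ_2}=\sum_{k\ \mathrm{even}}\binom{m}{k}(uv)^{m-k}=\frac{(uv+1)^m+(uv-1)^m}{2}.\]
Applying this with $m=n-1$ (for $j=1$) and $m=n-2$ (for each of the $\gamma_{n,2}$ types with $j=2$), and adding $\gamma_{n,j}(uv-1)^{n-j}$ for $j\ge3$, gives the formula of Proposition \ref{4}.

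\textbf{Main obstacle.} I expect the main difficulty to be Step 1 and the identification of each stratum as a stack in Step 2. One must check that:
\begin{itemize}
\item the locus of a fixed necklace type is really the stated quotient of a torus, including the correct automorphism group, with no hidden extra automorphisms and no nontrivial generic stabilizer;
\item additivity and the invariant-part description of $H^*_c$ are legitimate for these Deligne-Mumford stacks with rational coefficients.
\end{itemize}
I would handle this by writing the universal family over the torus explicitly. Using the scaling on each component, one marked point per component can be normalized to $1$. Two such normalized configurations are then isomorphic only via the node swap, or branch swap, described above.
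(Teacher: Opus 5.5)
Your proposal is correct and follows essentially the same route as the paper. Both decompose $\cB_n$ by necklace type, identify each stratum with $(\CC^*)^{n-j}$ for $j\ge 3$ or with $(\CC^*)^{m}/S_2$ under coordinatewise inversion for $j=1,2$, and take $S_2$-invariants of $H^*_c$. You also spell out details the paper leaves implicit: why rational tails cannot occur, the count $\gamma_{n,j}$, and why the inversion acts by $-1$ on $H^1_c(\CC^*)$.
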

See Example \ref{6} for the calculation of $B_n$ for $n\le 10$. 

\medskip

Observe that we have a decomposition $$\cB_n=\bigsqcup_{1\le j\le n}\cB_{n,j}$$ where 
$\cB_{n,j}$ denotes the locus of curves with $j$ rational components i.e. the dual graph is a necklace with $j$ vertices.

If $(C,p_1,\cdots, p_n)\in \cB_{n,1}$, $C$ is a rational curve with  normalization $\PP^1$. Let $0,\infty\in \PP^1$ map to the node of $C$. Then 
we find that 
$$\cB_{n,1}\cong (\CC^*)^n/(\CC^*\rtimes S_2)\cong (\CC^*)^{n-1}/S_2$$
where $-1\in S_2$ acts by $(z_1,\cdots,z_{n-1})\mapsto (z_1^{-1},\cdots,z_{n-1}^{-1}).$
Therefore, we have $$H^*_c(\cB_{n,1})\cong (H^*_c(\CC^*)^{\otimes n-1})^{S_2}$$
whose Hodge-Deligne polynomial is 
\beq\label{28}
\bH_{\cB_{n,1}}(u,v)=\frac12\left( (uv+1)^{n-1} + (uv-1)^{n-1} \right).\eeq

If $(C,p_1,\cdots,p_n)\in \cB_{n,2}$, $C=C_1\cup C_2$ with $C_1,C_2\cong \PP^1$.
The locus $\cB_{n,2,k}$ of such pointed curves where one component has $k$ marked points while the other has $n-k$ points is 
$$((\CC^*)^k/\CC^*\times (\CC^*)^{n-k}/\CC^*)/S_2\cong (\CC^*)^{n-2}/S_2$$
where $S_2$ interchanges the two nodes. We thus find that 
\beq\label{29}
\bH_{\cB_{n,2}}(u,v)=(2^{n-1}-1) \frac12\left( (uv+1)^{n-2} + (uv-1)^{n-2} \right).\eeq

For $j\ge 3$, the number of ways to distribute $n$ marked points to $j$ components is $\gamma_{n,j}$ in \eqref{20}. For each such distribution, we have a connected component of $\cB_{n,j}$ isomorphic to $(\CC^*)^{n-j}$. Hence we find that 
\beq\label{30}
\bH_{\cB_{n,j}}(u,v) = \gamma_{n,j}(uv-1)^{n-j},\quad \text{for } j\ge 3.\eeq

Adding \eqref{28}, \eqref{29} and \eqref{30}, we obtain Proposition \ref{4}. 
 
\medskip

\subsection{The Hodge-Deligne polynomial $\bH_{\bcM_{1,(0^+)^n}}$} 

By Propositions \ref{3} and \ref{4}, we obtain the following.
\begin{theo}\label{22}
The Hodge-Deligne polynomial $\bH_{\bcM_{1,(0^+)^n}}=\bH_{0|n}$ of $\bcM_{1,(0^+)^n}$ is 
\[ \bH_{0|n}(u,v)= \sum_{j=0}^{\lfloor\frac{n-1}{2}\rfloor} \binom{n-1}{2j} (uv+1)^{n-1-2j} \left( \alpha_{jj}(uv)^{j+1} - \sum_{i=0}^{j-1} \alpha_{ij}(uv)^i\right)
\]
\[ +\frac{(uv+1)^{n-1}+(uv-1)^{n-1}}{2} + \gamma_{n,2} \frac{(uv+1)^{n-2}+(uv-1)^{n-2}}{2} + \sum_{j=3}^n \gamma_{n,j} (uv-1)^{n-j} \]
\[ -\sum_{j=0}^{\lfloor\frac{n-1}{2}\rfloor} \binom{n-1}{2j} (uv+1)^{n-1-2j}  \sum_{i=0}^{j-1} \alpha_{ij}\beta_{2j-2i+2}(uv)^i(u^{2j-2i+1}+v^{2j-2i+1}) \]
where $\alpha_{ij}$, $\beta_k$ and $\gamma_{n,k}$ are constants defined by \eqref{21} and \eqref{20} respectively. 
\end{theo}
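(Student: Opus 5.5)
The plan is to use the decomposition $\bcM_{1,(0^+)^n}=\cI_n\sqcup\cB_n$ from the beginning of this section. Here $\cB_n=\pi_n^{-1}(\infty)$ is closed, being the preimage of a point under the proper morphism $\pi_n$, and $\cI_n$ is its open complement. The first property of a motivic invariant, $\bH_X=\bH_{X-Z}+\bH_Z$ applied with $Z=\cB_n$, therefore gives
\[
\bH_{0|n}=\bH_{\cI_n}+\bH_{\cB_n}=I_n+B_n .
\]
It remains to substitute the closed formulas for $I_n$ from Proposition \ref{3} and for $B_n$ from Proposition \ref{4}, and to group terms as in the statement. The first line of the statement is $I_n^{\mathrm{even}}$, the second line is $B_n$, and the third line is $I_n^{\mathrm{odd}}$.

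The only point I would check explicitly is that additivity may be used for Hodge-Deligne polynomials of Deligne-Mumford stacks and their coarse spaces, and not only for varieties. With rational coefficients, the compactly supported cohomology of $\bcM_{1,(0^+)^n}$ and its mixed Hodge structure agree with those of the coarse moduli space. The long exact sequence
\[
\cdots\lra H^k_c(\cI_n)\lra H^k_c(\bcM_{1,(0^+)^n})\lra H^k_c(\cB_n)\lra\cdots
\]
is an exact sequence of mixed Hodge structures, so taking the alternating sum of $\dim H^{p,q;k}_c$ gives additivity.

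This is also the convention already implicit in the proofs of Propositions \ref{3} and \ref{4}. Those proofs compute $H^*_c$ of quotients such as $(\CC^*)^{n-1}/S_2$ by taking invariants, and they work on the stack $\cM_{1,1}$ with its generic stabilizer $-I$. Consequently no genuine obstacle remains, and the proof amounts to adding the two propositions.
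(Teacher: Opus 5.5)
Your proposal is correct and is exactly the paper's argument: the theorem is stated as an immediate consequence of Propositions \ref{3} and \ref{4}, via $\bH_{0|n}=I_n+B_n$ from the decomposition $\bcM_{1,(0^+)^n}=\cI_n\sqcup\cB_n$, with the three displayed lines being $I_n^{\mathrm{even}}$, $B_n$ and $I_n^{\mathrm{odd}}$. Your remark that additivity holds for the Deligne--Mumford stack, because its rational compactly supported cohomology and mixed Hodge structure agree with those of the coarse space, makes explicit a point the paper leaves implicit; also, closedness of $\cB_n$ needs only continuity of $\pi_n$, not properness.
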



See Example \ref{7} for a computation of $\bH_{0|n}$ for $n\le 10$.

By Theorems \ref{22} and \ref{17}, we can now calculate $\bH_{\bcM_{1,n}}(u,v)$ recursively for all $n$.



\bigskip

\section{Computations for $n\le 10$}
Letting $x=uv$, by Proposition \ref{3}, we can evaluate $I_n$ as follows. 
\begin{exam}\label{5}
\begin{align*}
I_1(x) &= x \\
I_2(x) &= x^2 + x \\
I_3(x) &= x^3 + 3x^2 + x - 1 \\
I_4(x) &= x^4 + 6x^3 + 6x^2 - 2x - 3 \\
I_5(x) &= x^5 + 10x^4 + 20x^3 + 4x^2 - 14x - 7 \\
I_6(x) &= x^6 + 15x^5 + 50x^4 + 40x^3 - 30x^2 - 49x - 15 \\
I_7(x) &= x^7 + 21x^6 + 105x^5 + 160x^4 - 183x^2 - 139x - 31 \\
I_8(x) &= x^8 + 28x^7 + 196x^6 + 469x^5 + 280x^4 - 427x^3 - 700x^2 - 356x - 63 \\
I_9(x) &= x^9 + 36x^8 + 336x^7 + 1148x^6 + 1386x^5 - 406x^4 - 2436x^3 - 2224x^2 - 860x - 127 \\
I_{10}(x) &= x^{10} + 45x^9 + 540x^8 + 2484x^7 + 4662x^6 + 1764x^5 - 6090x^4 - 9804x^3 - 6372x^2 - 2003x - 255
\end{align*}
\end{exam}

\medskip

By Proposition \ref{4}, we can evaluate $B_n$ as follows with $x=uv$. 
\begin{exam}\label{6}
\begin{align*}
B_1(x) &= 1 \\
B_2(x) &= x + 1 \\
B_3(x) &= x^2 + 3x + 2 \\
B_4(x) &= x^3 + 7x^2 + 9x + 4 \\
B_5(x) &= x^4 + 15x^3 + 31x^2 + 25x + 8 \\
B_6(x) &= x^5 + 31x^4 + 100x^3 + 111x^2 + 65x + 16 \\
B_7(x) &= x^6 + 63x^5 + 316x^4 + 476x^3 + 351x^2 + 161x + 32 \\
B_8(x) &= x^7 + 127x^6 + 987x^5 + 2178x^4 + 1883x^3 + 1023x^2 + 385x + 64 \\
B_9(x) &= x^8 + 255x^7 + 3053x^6 + 10515x^5 + 12307x^4 + 6637x^3 + 2815x^2 + 897x + 128 \\
B_{10}(x) &= x^9 + 511x^8 + 9366x^7 + 51313x^6 + 92466x^5 + 62065x^4 + 21654x^3 + 7423x^2 + 2049x + 256
\end{align*}
\end{exam}

\medskip 

By adding $I_n$ and $B_n$, we find $T_n=\bH_{\bcM_{1,0|n}}$ as follows. 
\begin{exam}\label{7}
\begin{align*}
T_1(x) &= x + 1 \\
T_2(x) &= x^2 + 2x + 1 \\
T_3(x) &= x^3 + 4x^2 + 4x + 1 \\
T_4(x) &= x^4 + 7x^3 + 13x^2 + 7x + 1 \\
T_5(x) &= x^5 + 11x^4 + 35x^3 + 35x^2 + 11x + 1 \\
T_6(x) &= x^6 + 16x^5 + 81x^4 + 140x^3 + 81x^2 + 16x + 1 \\
T_7(x) &= x^7 + 22x^6 + 168x^5 + 476x^4 + 476x^3 + 168x^2 + 22x + 1 \\
T_8(x) &= x^8 + 29x^7 + 323x^6 + 1456x^5 + 2458x^4 + 1456x^3 + 323x^2 + 29x + 1 \\
T_9(x) &= x^9 + 37x^8 + 591x^7 + 4201x^6 + 11901x^5 + 11901x^4 + 4201x^3 + 591x^2 + 37x + 1 \\
T_{10}(x) &= x^{10} + 46x^9 + 1051x^8 + 11850x^7 + 55975x^6 + 94230x^5 + 55975x^4 + 11850x^3 + 1051x^2 + 46x + 1
\end{align*}
\end{exam}
Note that this matches perfectly with Table 2 in \cite{KSY} with $x=uv$.

\medskip

By \eqref{18} and Theorem \ref{22}, we can compute $\bH_{r|s}=\bH_{\bcM_{1,r|s}}$ as follows with $x=uv$. 
\begin{exam}\label{23}
\begin{align*}
\bH_{0|1} = \bH_{1|0} &= x + 1 \\
\bH_{0|2} = \bH_{1|1} = \bH_{2|0} &= x^2 + 2x + 1 \\
\bH_{0|3} &= x^3 + 4x^2 + 4x + 1 \\
\bH_{1|2} = \bH_{2|1} = \bH_{3|0} &= x^3 + 5x^2 + 5x + 1 \\
\bH_{0|4} &= x^4 + 7x^3 + 13x^2 + 7x + 1 \\
\bH_{1,3} &= x^4 + 11x^3 + 21x^2 + 11x + 1 \\
\bH_{2|2} = \bH_{3|1} = \bH_{4|0} &= x^4 + 12x^3 + 23x^2 + 12x + 1 \\
\bH_{0|5} &= x^5 + 11x^4 + 35x^3 + 35x^2 + 11x + 1 \\
\bH_{1|4} &= x^5 + 22x^4 + 79x^3 + 79x^2 + 22x + 1 \\
\bH_{2|3} &= x^5 + 26x^4 + 97x^3 + 97x^2 + 26x + 1 \\
\bH_{3|2} = \bH_{4|1} = \bH_{5|0} &= x^5 + 27x^4 + 102x^3 + 102x^2 + 27x + 1.
\end{align*}
\end{exam}

Note that this matches perfectly with Table 1 in \cite{KSY} after applying the hook-length formula to extract the Hodge-Deligne polynomials.  

\medskip

By \eqref{19} and Theorem \ref{22}, we can compute $\bH_{n|0}=\bH_{\bcM_{1,n}}$ as follows with $x=uv$. 
\begin{exam}\label{24}
\begin{align*}
\bH_{\bcM_{1,1}} &= x + 1 \\
\bH_{\bcM_{1,2}} &= x^2 + 2x + 1 \\
\bH_{\bcM_{1,3}} &= x^3 + 5x^2 + 5x + 1 \\
\bH_{\bcM_{1,4}} &= x^4 + 12x^3 + 23x^2 + 12x + 1 \\
\bH_{\bcM_{1,5}} &= x^5 + 27x^4 + 102x^3 + 102x^2 + 27x + 1 \\
\bH_{\bcM_{1,6}} &= x^6 + 58x^5 + 421x^4 + 756x^3 + 421x^2 + 58x + 1 \\
\bH_{\bcM_{1,7}} &= x^7 + 121x^6 + 1612x^5 + 5077x^4 + 5077x^3 + 1612x^2 + 121x + 1 \\
\bH_{\bcM_{1,8}} &= x^8 + 248x^7 + 5802x^6 + 31072x^5 + 52402x^4 + 31072x^3 + 5802x^2 + 248x + 1 \\
\bH_{\bcM_{1,9}} &= x^9 + 503x^8 + 19925x^7 + 175036x^6 + 480097x^5 + 480097x^4 + 175036x^3 + 19925x^2 + 503x + 1 \\
\bH_{\bcM_{1,10}} &= x^{10} + 1014x^9 + 66090x^8 + 920263x^7 + 3975949x^6 + 6349238x^5 + 3975949x^4 + 920263x^3 + \cdots.
\end{align*}
\end{exam}
Note that this matches perfectly with \cite[eM1bar.txt]{BeT} and \cite[page 491]{Getzler2}. 

\medskip

By Theorems \ref{17} and \ref{22}, it is straightforward to write up a computer program that computes the Hodge-Deligne and Poincar\'e polynomials of $\bcM_{1,n}$ for higher $n$, quite effeciently. For $n=50$, we get the following picture for the probability distribution \beq\label{31}
p_{n,i}=\frac{\dim H^{2i}(\bcM_{1,n})}{\sum_{k=0}^n\dim H^{2k}(\bcM_{1,n})}.\eeq
The red curve in the picture is the graph of a Gaussian function. 

\begin{center}
\includegraphics[width=10cm]{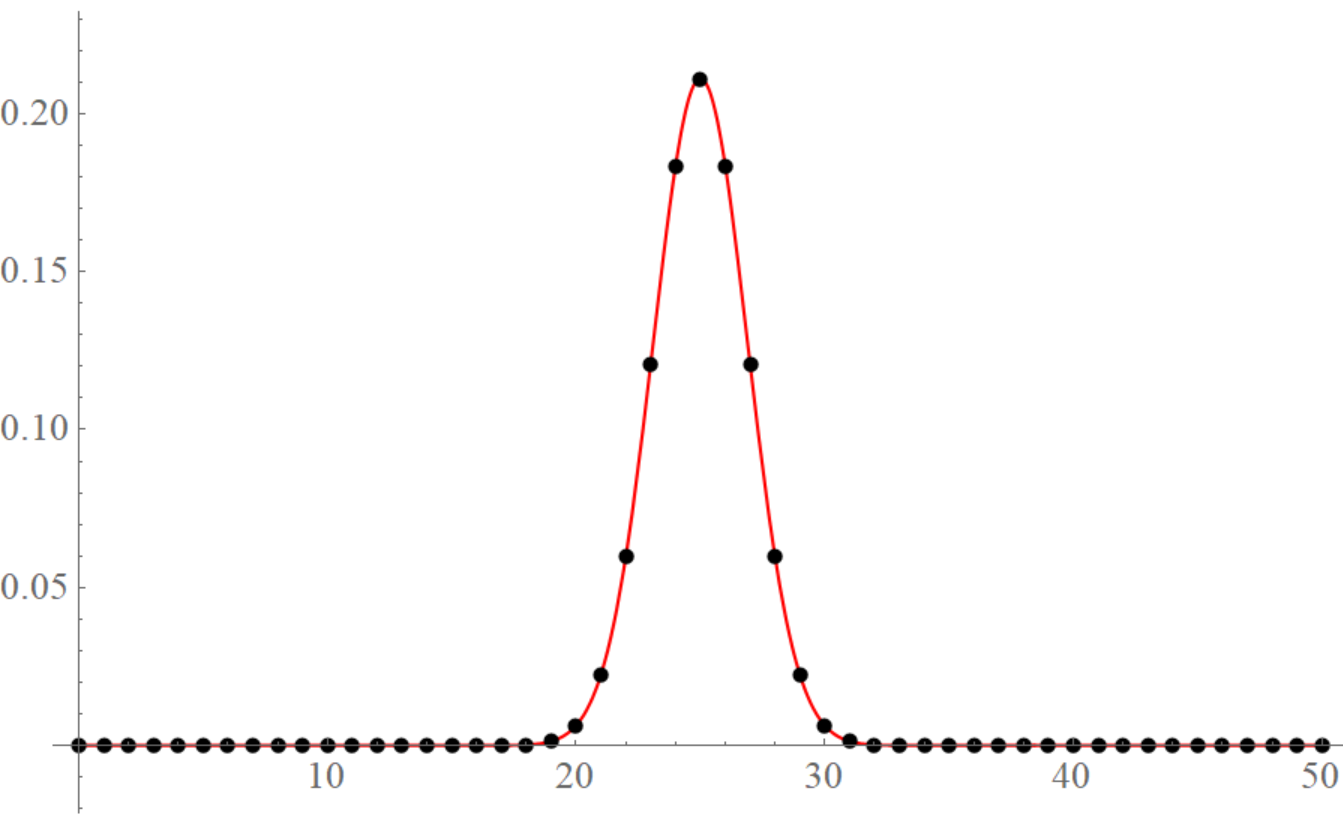}
\end{center}

In the subsequent paper \cite{CKg1}, we will prove that indeed the probability distribution \eqref{31} is asymptotically Gaussian. 
Furthermore, we will prove a formula that gives us effective estimates of the Betti numbers for $n$ large and show that the Betti sequence is asymptotically 3-ultra log concave.

\bigskip

\bibliographystyle{amsplain}

\end{document}